\documentclass[11pt]{amsart}
\usepackage{amssymb,amsmath,amsthm}

\usepackage[  sfmathbb, partialup, nofligatures, ]{kpfonts}

\theoremstyle{theorem}
\newtheorem{theorem}{Theorem}
 \newtheorem{lemma}{Lemma}
 \newtheorem{corollary}{Corollary}
\theoremstyle{definition}

\begin{document}

\title{Sorry, the nilpotents are in the center}

\author{Vineeth Chintala}

\address{Indian Institute of Technology, Bombay, India}
\email{vineethreddy90@gmail.com}

\maketitle

\begin{abstract}
One of the key components of an algebra its set of nilpotents.  We give a simple proof of a classic result which states that a finite ring is commutative if all its nilpotents lie in the center.
\end{abstract}
\textbf{Keywords} : Noncommutative rings, idempotent, nilpotent, finite rings
\section*{Introduction}
\indent 

At the beginning of the 20th century, Wedderburn showed that 
\emph{every finite division ring is a field}. This startling result, along with the structure theory of rings, lead to many interesting results in 20th century Algebra, by Jacobson, Herstein, Kaplansky and others. Perhaps the most folklore of these is the result of Jacobson \cite{J}, which says that if all the elements of a ring satisfy the equation $x^{n(x)} = x$, then the ring is commutative. Jacobson's result has been further generalized, in the same direction, by Herstein and other mathematicians, showing that a ring is commutative if the elements satisfy certain general polynomial identities.

Due to the restriction of multiplicative freedom in the finite case, one may expect an elegant generalization of Wedderburn's result for finite rings.
Indeed, Herstein observed in \cite{H1} that :\\
\vskip 1mm
\noindent \emph{ If every nilpotent of a finite ring lies in its center, 
then the ring is commutative.}\\
\vskip 1mm
 As one can see, this generalizes Wedderburn's theorem (as division rings have no nilpotent elements). 
Unfortunately, this beautiful result isn't mentioned in graduate textbooks and is far from being well-known. Part of the reason may be due to the fact that it was proved as a corollary to more general (and technical) results in ring theory. In this paper, we give a simple proof of Herstein's result and hope to bring it to a wide audience. The proof is especially instructive to students as an illustration of how a result about the structure of an algebra can be strung together by figuring out the interplay of its idempotents and nilpotents. Indeed, understanding that nilpotents play a key role led to the notion of a \emph{radical} which was critical for the structure theory of algebras \cite{H2, L}. We will not get into this; Instead we hope to illustrate the power of nilpotents and idempotents by presenting an elementary result.
The ideas that lead to the proof are not original by any means and can be mined by studying the classic commutativity results of the 20th century.

A reader looking for a taste of noncommutative ring theory can find many excellent sources, including \cite{H2, L, R}. The book by Schafer \cite{S} is a nice introduction to the nonassociative world of Lie and Jordan algebras. Since Wedderburn's result is well-known, we will not prove it here again; (An elementary proof can be found in \cite{AZ}).

\subsection{Notation}
An element $x$ is called an idempotent if $x^2=x$ and a nilpotent if $x^n =0$. 
The center of the ring is denoted by $C$ and its set of nilpotents is denoted by $N$.

\section{Finite rings with central nilpotents. }
\noindent We will prove the main result by combining three presumably well-known lemmas. 

\vskip 2mm
\begin{lemma}\label{lemma1}
Let $R$ be a ring and $e$ an idempotent. If $e$ commutes with all the nilpotents of $R$, then $e$ is central. 
\end{lemma}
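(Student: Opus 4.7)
The plan is to exhibit, for each $x\in R$, two specific nilpotents built from $e$ and $x$ that the hypothesis forces to vanish, and to read off centrality of $e$ from their vanishing. The natural candidates (working without assuming a unit, so I phrase everything additively rather than via $1-e$) are
\[
  a \;=\; ex - exe, \qquad b \;=\; xe - exe .
\]

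First I would verify that both $a$ and $b$ square to zero, hence are nilpotent. Expanding $a^{2}=(ex-exe)(ex-exe)$ and repeatedly applying $e^{2}=e$ collapses the four terms in pairs, giving $a^{2}=0$; the computation for $b^{2}$ is the mirror image. This is the one step that actually uses that $e$ is idempotent, and it is the core idea of the proof, though it is not really an obstacle so much as the trick one has to see.

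Next I would invoke the hypothesis: since $a,b\in N$, we have $ea=ae$ and $eb=be$. But a second direct computation using $e^{2}=e$ gives
\[
  ea = ex-exe = a,\qquad ae = exe-exe = 0,
\]
and symmetrically $eb=0$, $be=b$. Combining $ea=ae$ with $ea=a$ and $ae=0$ forces $a=0$, i.e.\ $ex=exe$; combining $eb=be$ with $eb=0$ and $be=b$ forces $b=0$, i.e.\ $xe=exe$. Therefore $ex=xe$ for every $x\in R$, so $e\in C$.

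There is no real obstacle; the only piece of insight required is the choice of the nilpotents $ex-exe$ and $xe-exe$ (which are the unit-free versions of $ex(1-e)$ and $(1-e)xe$, the standard "off-diagonal" pieces of $x$ in the Peirce decomposition associated to $e$). Everything else is a two-line computation using only $e^{2}=e$ and the commuting hypothesis.
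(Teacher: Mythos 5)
Your proof is correct and is essentially the paper's own argument: the paper takes the nilpotent $xe-exe$, uses $(xe-exe)e = e(xe-exe)$ together with $e^2=e$ to force it to vanish, and handles $ex-exe$ "similarly," exactly as you do explicitly. No differences worth noting.
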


\begin{proof}
 For any $x\in R$, we have $(xe - exe)^2 = 0$. As $e$ commutes with all nilpotents,
\[xe -exe = (xe - exe)e = e(xe - exe) = 0.\] Similarly  $ex = exe$, thus $e\in C$. 
\end{proof}

\vskip 5mm

\begin{lemma}\label{lemma2}
Let $R$ be a finite ring with no non-trivial idempotents. Let $x \in R$. Then either $x$ is a nilpotent or $x$ is invertible
\end{lemma}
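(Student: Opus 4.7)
The strategy is to pigeonhole on powers of $x$ and extract an idempotent of the form $x^j$, then use the hypothesis that the only idempotents are $0$ and $1$. Since $R$ is finite, the sequence $x, x^2, x^3, \ldots$ cannot consist of distinct elements, so there exist positive integers $k$ and $m$ with $x^{k} = x^{k+m}$. Iterating this relation (or multiplying by appropriate powers of $x$) one gets $x^{k+i} = x^{k+i+jm}$ for all $i \geq 0$ and $j \geq 0$; that is, once we are past the $k$-th power, the sequence is periodic with period $m$.

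Next I would choose $N$ large enough that $Nm \geq k$, and set $e := x^{Nm}$. The claim is that $e$ is an idempotent: writing $2Nm = Nm + k + (Nm - k)$ and using the periodicity established above, one reduces $x^{2Nm}$ back to $x^{Nm}$, so $e^2 = e$. This is the single computational step of the argument, and the only place where one has to be careful with indices; apart from index-juggling, it is routine.

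Once $e = x^{Nm}$ is known to be idempotent, the hypothesis that $R$ has no non-trivial idempotents forces $e \in \{0, 1\}$. If $x^{Nm} = 0$ then $x$ is nilpotent, by definition. If $x^{Nm} = 1$ then $x^{Nm-1}$ is a two-sided inverse for $x$, since $x \cdot x^{Nm-1} = x^{Nm-1} \cdot x = x^{Nm} = 1$, so $x$ is invertible. Thus the dichotomy of the lemma holds.

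The only mildly tricky point is the idempotent-extraction step, i.e., verifying that some power $x^{Nm}$ really does square to itself once we are past the pre-periodic part of the orbit. Everything else is immediate from the hypothesis on idempotents.
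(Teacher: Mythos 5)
Your argument is correct and is essentially the paper's proof: both use finiteness to find a repetition among the powers of $x$, extract an idempotent power of $x$ from it, and then invoke the hypothesis to conclude that this idempotent is $0$ (so $x$ is nilpotent) or $1$ (so $x$ is invertible). The paper just streamlines the bookkeeping by working with the subsequence $x^{2^i}$, where the relation $x^{2^r}=(x^{2^r})^t$ immediately yields the idempotent $(x^{2^r})^{t-1}$ without the pre-period/period analysis.
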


\begin{proof}
The set $\{x^{2^i} :  i \geq 1\}$ is finite and so $x^{2^r} = (x^{2^r})^t$ for some $r,t > 1$. 
\vskip 1mm
Let $y = x^{2^r}$. Then $y^{t-1}$ is an idempotent. Indeed, 
\[ y^{2(t-1)} = y^{t-2}y^t = y^{t-2}y = y^{t-1}.\]
Since $R$ has no nontrivial idempotents, we have $y^{t-1} \in \{0,1\}$.
\end{proof}

\vskip 5mm

\begin{lemma}\label{lemma3}
Suppose $[R,R] \subseteq C$. 
Let $b\in R$ such that $p^nb =0$. Then $b^{p^n} \in C$. 
\end{lemma}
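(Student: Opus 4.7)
The plan is to exploit the fact that when all commutators are central, the map $x \mapsto [x,c]$ behaves like a derivation on any single variable, which yields a clean power formula. Concretely, for any $a,c \in R$ I expect to prove by induction on $k$ that
\[
[a^k, c] = k\, a^{k-1}\,[a,c].
\]
The base case $k=1$ is trivial. For the inductive step I would write $[a^{k+1},c] = a[a^k,c] + [a,c]a^k$; using the inductive hypothesis and the fact that $[a,c] \in C$ (so it commutes with $a^k$), both terms consolidate to $(k+1)a^k[a,c]$. This is really the only nontrivial step in the whole argument.

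Given that identity, I would set $k = p^n$ and apply it to $b$: for every $c \in R$,
\[
[b^{p^n}, c] = p^n\, b^{p^n - 1}\, [b,c].
\]
Now I would use the hypothesis $p^n b = 0$. Assuming $p^n \geq 2$ (which is automatic for any prime $p$ and $n \geq 1$; the edge case $n=0$ forces $b=0$ and the conclusion is vacuous), one has
\[
p^n\, b^{p^n-1} \;=\; (p^n b)\cdot b^{p^n - 2} \;=\; 0,
\]
since $p^n$ is an integer scalar and therefore passes freely through the product. Hence $[b^{p^n}, c] = 0$ for every $c \in R$, which means $b^{p^n} \in C$.

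The main obstacle, modest as it is, is verifying the derivation-style formula $[a^k,c] = k a^{k-1}[a,c]$; after that the rest is a one-line application of the torsion hypothesis $p^n b = 0$. I don't anticipate any subtlety beyond making sure the scalar $p^n$ can be absorbed into $b$ inside a product, which is automatic in any ring.
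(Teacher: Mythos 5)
Your proof is correct and is essentially the paper's own argument in slightly different notation: your identity $[a^k,c]=k\,a^{k-1}[a,c]$ is the paper's $ab^i=b^ia+icb^{i-1}$ (with $c=ab-ba$ central), and both conclude by absorbing the scalar $p^n$ into $b$ via $p^nb=0$. No gaps.
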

\begin{proof}
By our hypothesis, the element $c = ab -ba$ commutes with $b$. Multiplying on the right by $b$, we get 
\[ab^2 =bab +cb = b^2a +2cb.\]
Continuing this way, we get $ab^i =b^ia + icb^{i-1}$. Thus $ab^{p^n} =b^{p^n}a$, since $p^nb = 0$.
\end{proof}

\vskip 5mm

\begin{theorem}
If all the nilpotents of a finite ring $R$ lie in its center, then the ring is commutative.
\end{theorem}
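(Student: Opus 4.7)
The plan is to reduce to an indecomposable factor, identify $R/N$ as a finite field via Wedderburn, and then squeeze commutativity out of Lemma~\ref{lemma3} by one carefully chosen commutator.

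First I would invoke Lemma~\ref{lemma1}: since all nilpotents are central by assumption, every idempotent of $R$ commutes with every nilpotent, so every idempotent is central. Central idempotents split $R$ as a direct sum of rings, and each summand inherits the hypothesis, so without loss of generality I may assume $R$ is indecomposable, in which case its only idempotents are $0$ and $1$. Lemma~\ref{lemma2} then tells me that every $x \in R$ is either nilpotent or a unit, so $N$ coincides with the set of non-units. Because nilpotents are central, $N$ is closed under addition (binomial theorem) and under multiplication by $R$, hence it is a two-sided ideal. The quotient $R/N$ is a finite ring in which every non-zero element is a unit and no non-zero element is nilpotent, i.e., a finite division ring, so by Wedderburn's theorem $R/N = \mathbb{F}_q$ for some prime power $q = p^m$.

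Next I would pin down the characteristic. The additive Sylow subgroups of $R$ are two-sided ideals, so indecomposability forces the characteristic to be a prime power $p^n$, and in particular $p \cdot 1$ is nilpotent and central. Since $R/N$ is commutative, $[R,R] \subseteq N \subseteq C$, so the hypothesis of Lemma~\ref{lemma3} is in place.

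For the endgame, fix $a, b \in R$ and set $c = [a,b]$, which lies in $C$. The computation inside Lemma~\ref{lemma3} gives $[a, b^k] = k c b^{k-1}$ for every $k \geq 1$. Since $\bar b^q = \bar b$ in $R/N = \mathbb{F}_q$, the element $b^q - b$ lies in $N \subseteq C$, so $[a, b^q - b] = 0$. Subtracting the two expressions yields the key relation $c(q b^{q-1} - 1) = 0$. It remains to observe that $q b^{q-1} - 1$ is a unit: if $q = 0$ in $R$ it is simply $-1$, and otherwise $q \cdot 1 = (p \cdot 1)^m$ is a central nilpotent, so $q b^{q-1}$ is nilpotent and $1 - q b^{q-1}$ is a unit. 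Either way $c = 0$, so $R$ is commutative. The main obstacle I expect is spotting that $[a, b^q - b] = 0$ combined with the identity from Lemma~\ref{lemma3} produces a cancellable factor; once that is seen, the nilpotence of $q \cdot 1$ makes the final cancellation automatic.
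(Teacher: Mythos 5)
Your proof is correct, and while the reduction is exactly the one the paper uses --- Lemma~\ref{lemma1} to make every idempotent central, the Sylow decomposition of the additive group to force characteristic $p^n$, Lemma~\ref{lemma2} plus Wedderburn to identify $R/N$ with a finite field $\mathbb{F}_q$, and hence $[R,R]\subseteq N\subseteq C$ --- your endgame is genuinely different. The paper runs a second pigeonhole argument on the powers $b^{p^{in}}$ to manufacture an exponent $p^{tn}$ that is simultaneously a power of the characteristic and a Frobenius fixed point modulo $N$, so that Lemma~\ref{lemma3} as stated puts $b^{p^{tn}}$ in the center and then $b = b^{p^{tn}} - (b^{p^{tn}}-b)$ is a difference of two central elements: it shows each \emph{element} is central. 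You instead reopen the proof of Lemma~\ref{lemma3} to extract the identity $[a,b^{k}] = k[a,b]\,b^{k-1}$, feed in $k=q$ using $\bar b^{\,q}=\bar b$ in $\mathbb{F}_q$, and cancel the unit $qb^{q-1}-1$ (a unit precisely because $q\cdot 1$ is a central nilpotent, so $qb^{q-1}\in N$): it shows each \emph{commutator} vanishes. Your route avoids the second finiteness step entirely and does not care whether $q$ is at least the characteristic; the only cost is that you must use the inductive identity inside Lemma~\ref{lemma3} rather than its statement. Both arguments are sound; yours is arguably the more economical close, trading the paper's ``every element is a central element plus a nilpotent correction'' for a direct cancellation against a unit.
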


\begin{proof}
If $R = R_1 \oplus R_2$, then the theorem holds for $R$ if it holds for each $R_i$. So it is enough to consider the case where $R$ cannot be decomposed further.
Then,
\begin{itemize}
\item \emph{$R$ has no non-trivial idempotents} : If $e$ is an idempotent then $e$ is central by Lemma ~\ref{lemma1}, and so  $R \simeq Re\oplus R(1-e)$.
\vskip 1mm
\item \emph{$p^n \cdot R =0$ for some prime $p$} : If $m_1m_2\cdot R =0$ with $(m_1,m_2) = 1$, then $R \simeq R_1 \oplus R_2$ where $m_i\cdot R_i =0$. 
\end{itemize}

By our hypothesis,  $N \subseteq C$. In particular, $N$ is an ideal. By Lemma ~\ref{lemma2}, any $x \notin N$ is invertible. Then $\bar{x} \in R/N$ is also invertible and so (by Wedderburn's result) $R/N$ is a finite field with $p \cdot R/N = 0$. In particular $R/N$ is commutative, hence \[ [R,R] \subseteq N \subseteq C.\]

 In addition, the set $\{b^{p^{in}} :  i \geq 1\}$ is finite and so $b^{p^{rn}} = b^{p^{sn}}$ for some $r<s$. Therefore
$(\bar{b}^{p^{tn}} - \bar{b})^{p^{rn}} =0 $ {\small (where $t =s-r$)}, thus \[b^{p^{tn}} -b \in N.\]  
Since $b^{p^{tn}} \in C$ (by Lemma ~\ref{lemma3}), it follows that $b\in C$ for all $b \in R$.
 \footnote{It has come to the author's notice that another proof has been presented by Steve Ligh \cite{SL}. Unfortunately, that article is no longer accessible (online or offline) and the author hasn't been able to confirm if the proof is similar. }
\end{proof}
\vskip 5mm

As a corollary, we get the finite-version of Jacobson's theorem (Theorem 12.10, \cite{J}). 

\begin{corollary}
Let $R$ be a finite ring. If every $x \in R$ satisfies an equation of the type $x^{n(x)} =x$ for some $n(x) >1$, then $R$ is commutative.
\end{corollary}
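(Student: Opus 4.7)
The plan is to deduce the corollary directly from the main theorem by showing that the hypothesis $x^{n(x)} = x$ forces the ring to have no nontrivial nilpotents. Once we verify this, the nilpotent set $N = \{0\}$ is vacuously contained in the center $C$, and the theorem immediately yields commutativity of $R$.

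To carry this out, I would take an arbitrary $x \in N$ with $x^k = 0$, and apply the hypothesis to get $x^{n} = x$ for some $n = n(x) > 1$. Iterating the identity $x = x^n$, we obtain $x = x^n = (x^n)^n = x^{n^2}$, and by induction $x = x^{n^m}$ for every $m \geq 1$. Choosing $m$ large enough that $n^m \geq k$ then forces $x = x^{n^m} = 0$. Hence the only nilpotent in $R$ is $0$, which certainly lies in $C$.

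With $N \subseteq C$ established, the main theorem applies and gives that $R$ is commutative. There is no real obstacle here: the only content beyond the theorem itself is the elementary observation that an element satisfying $x^n = x$ cannot be a nonzero nilpotent, which is a short calculation of the kind sketched above.
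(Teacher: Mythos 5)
Your proposal is correct and follows exactly the paper's route: observe that $x^{n(x)}=x$ forces every nilpotent to be zero (so $N=\{0\}\subseteq C$) and then invoke the main theorem. The only difference is that you spell out the short computation $x=x^{n^m}=0$ for $n^m\geq k$, which the paper leaves implicit.
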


\begin{proof}
Since  $x^{n(x)} =x$, the zero element is the only nilpotent and we can simply use the above theorem. 
\end{proof}

\vskip 5mm

\begin{theorem}
Consider the matrix ring $M_n(R)$ where $n >1$ and $R$ is any associative ring with unity.
Then any element of $M_n(R)$ which commutes with all the nilpotents lies in the 
center. 
\end{theorem}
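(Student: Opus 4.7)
The plan is to exploit the abundance of nilpotents in $M_n(R)$ when $n > 1$: for every pair $i \neq j$ and every $r \in R$, the matrix $r e_{ij}$ is nilpotent, since $(r e_{ij})^2 = 0$. By hypothesis, $A$ therefore commutes with all of these.

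First I would take $r = 1$ and write $A = \sum_{k,l} a_{kl} e_{kl}$. Using the standard matrix-unit identities $e_{kl} e_{ij} = \delta_{li} e_{kj}$, the equation $A e_{ij} = e_{ij} A$ expands to $\sum_k a_{ki} e_{kj} = \sum_l a_{jl} e_{il}$. The left-hand side is supported in column $j$ and the right-hand side in row $i$, so comparing entries forces $a_{ki} = 0$ for $k \neq i$, $a_{jl} = 0$ for $l \neq j$, and $a_{ii} = a_{jj}$. Letting $i,j$ range over all distinct pairs (which is where the hypothesis $n > 1$ enters) collapses $A$ to a scalar matrix $aI$ with $a \in R$.

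Next I would use the full family of nilpotents $r e_{ij}$ to pin down the scalar. Since $A = aI$, the commutation $A (r e_{ij}) = (r e_{ij}) A$ reduces to $ar\, e_{ij} = ra\, e_{ij}$, so $ar = ra$ for every $r \in R$. Thus $a$ lies in the center of $R$, which is exactly the condition for $aI$ to be central in $M_n(R)$.

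There is no real obstacle; the whole argument is just linear bookkeeping with matrix units. I note that Lemma~\ref{lemma1} cannot be invoked directly for the idempotents $e_{ii}$, since these do not themselves commute with all nilpotents (for instance $e_{11}$ fails to commute with $e_{12}$); the direct computation above is what does the job, and it shows that in a matrix ring the nilpotents already \emph{determine} the center.
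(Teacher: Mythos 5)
Your proof is correct and follows essentially the same route as the paper: both test $A$ against the square-zero nilpotents supported on a single off-diagonal entry (the matrices $r e_{ij}$, $i \neq j$) and compare entries to force $A = aI$ with $a$ central in $R$. Your version is cleaner in that the matrix-unit identities handle all $n > 1$ uniformly, whereas the paper illustrates the computation for $3 \times 3$ matrices and reduces to the $2 \times 2$ case; your side remark that Lemma~\ref{lemma1} cannot be applied to the idempotents $e_{ii}$ is also accurate.
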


\begin{proof}
The proof is a straight forward
computation; we illustrate it for $3\times 3$ matrices with entries from the ring $R$.
We will reduce the problem to $2\times 2$ matrices with entries from $R$.
Let
$A = (a_{ij})$ be a matrix which commutes with all the nilpotents.\\
Let $ B = \left( \begin{array}{ccc}
0 & 0 & x \\
0 & 0 & 0 \\
0 & 0 & 0 \end{array} \right)$.\\

 Now $AB = \left( \begin{array}{ccc} 
0 & 0 & a_{11}x \\
0 & 0 & a_{21}x\\
0 & 0 & a_{31}x \end{array} \right)$ 
$= BA = \left( \begin{array}{ccc} 
xa_{31} &  xa_{32} & xa_{33} \\
0       &    0     & 0\\
0       &    0     & 0\end{array} \right)$.\\

By substituting $x=1$, we get $a_{31} = a_{32} = 0.$
Moreover we have $a_{11}x= xa_{33}$ for any $x\in R$. Therefore $a_{11} = a_{33}$ lies in the center of $R$.
Since the transpose of $A$ commutes with all the nilpotents,
we similarly conclude that $a_{13} = a_{23} = 0$. \\

Hence $A = \left( \begin{array}{ccc} 
A'   & 0\\
0    & c\end{array} \right)$,
where $A' =  \left( \begin{array}{ccc} 
c      & a_{12} \\
a_{21} & a_{22}\end{array} \right)$ and $c$ lies in the center of $R$.
We also require that $A'$ commutes with all the nilpotents in the $2\times 2$ matrix ring with entries from $R$.
Since both $A'$ and its transpose commute with the nilpotent matrix $\left( \begin{array}{ccc} 
0   & 1\\
0   & 0\end{array} \right)$, 
it follows that $A'= \left( \begin{array}{ccc} 
c   & 0\\
0   & c\end{array} \right)$. Hence $A$ is a diagonal matrix with entries 
from the center of $R$.
\end{proof}
\vskip 5mm

\textbf{Acknowledgements}: I would like to thank the Indian Institute of Technology, Bombay for funding my research.

\end{document}